\newcommand{\R}{\mathbb{R}}  
\newcommand{\Sp}{\mathbb{S}} 
\newcommand{\iC}{\mathrm{i}} 
\newcommand{\Eu}{\mathrm{e}} 
\newcommand{\abs}[1]{\left|#1\right|}
\newcommand{\norm}[1]{\left\|#1\right\|}
\newcommand{\tonde}[1]{\left(#1\right)}
\newcommand{\graffe}[1]{\left\{#1\right\}}
\newcommand{\insieme}[2]{\graffe{#1 \colon #2}}
\newcommand{\Abs}[1]{\bigl|#1\bigr|}
\newcommand{\Norm}[1]{\bigl\|#1\bigr\|}
\newcommand{\Tonde}[1]{\bigl(#1\bigr)}
\newcommand{\TOnde}[1]{\Bigl(#1\Bigr)}
\renewcommand{\ge}{\geqslant} %
\renewcommand{\le}{\leqslant} %
\newcommand{\les}{\lesssim} 
\renewcommand{\d}{\,{\rm d}}
\newcommand{\der}[3][]{{\frac{{\rm d}^{#1} #2}{{\rm d}{#3}^{#1}}}}
\newcommand{\uno}{\mathbf{1}}
\newcommand{\pvector}[1]{%
  \begin{pmatrix}
    #1
  \end{pmatrix}}
\newcommand{\ddirac}[1]{%
  \,\boldsymbol{\delta}\!\pvector{#1}\!}
\newcommand{\1}[1]{\frac{1}{#1}}
\newcommand{\ovl}{\overline}
\newcommand{\mR}{{\mathcal R}}
\newcommand{\conv}{\ast}
\newcommand{\tensor}{\otimes}
\theoremstyle{plain}
\newtheorem{theorem}{Theorem}[section]
\newtheorem{lemma}[theorem]{Lemma}
\newtheorem{proposition}[theorem]{Proposition}
\newtheorem{corollary}[theorem]{Corollary}
\theoremstyle{definition}
\newtheorem{definition}[theorem]{Definition}
\theoremstyle{remark}
\newtheorem{remark}[theorem]{Remark}
\begin{document}

\title{Global Maximizers for the Sphere Adjoint Fourier Restriction Inequality}
\author{Damiano Foschi}
\ead{damiano.foschi@unife.it}
\address{Dipartimento di Matematica e Informatica, Universit\`a di Ferrara \\
  via Macchiavelli 35, 44121 Ferrara - Italy}
\date{October 26, 2013}

\begin{abstract}
  We show that constant functions are global maximizers
  for the adjoint Fourier restriction inequality for the sphere.
\end{abstract}


\begin{keyword}
  Fourier restriction \sep Stein Tomas inequality \sep maximizers \sep sphere
\end{keyword}

\maketitle

\section{Introduction}

Recently, Christ and Shao \cite{CS, CS2} have proved the existence of maximizers
for the adjoint Fourier restriction inequality of Stein and Tomas~\cite{St}
for the sphere:
\begin{equation} \label{eq:aFriS}
  \Norm{\widehat{f \sigma}}_{L^4(\R^3)} \les \norm{f}_{L^2(\Sp^2)}, 
\end{equation}
where $\Sp^2 = \insieme{x \in \R^3}{\abs{x} = 1}$ is the standard unit sphere
equipped with its natural surface measure~$\sigma$
induced by the Lebesgue measure on $\R^3$.
Here the Fourier transform of a integrable function~$f$ supported on the sphere is defined
for any~\mbox{$x \in \R^3$} by
\begin{equation*}
  \widehat{f \sigma}(x) = \int_{\Sp^2} \Eu^{-\iC x \cdot \omega} f(\omega) \d\sigma_\omega.
\end{equation*}
Let us denote by $\mR$ the optimal constant in \eqref{eq:aFriS}:
\begin{equation*}
  \mR := \sup_{f \in L^2(\Sp^2), f \neq 0}
  \frac{\Norm{\widehat{f \sigma}}_{L^4(\R^3)}}{\norm{f}_{L^2(\Sp^2)}}.
\end{equation*}
In \cite{CS},
using concentration compactness methods,
they prove that there exist sequences $\graffe{f_k}$
of nonnegative even functions in $L^2(\Sp^2)$ which converge to some maximizer
of the ratio $\|\widehat{f \sigma}\|_{L^4} / \norm{f}_{L^2}$,
but they do not compute the exact value of $\mR$.
Nevertheless, they show that constant functions are \emph{local} maximizers
and raise the question of whether constants are actually \emph{global} maximizers.
The purpose of this note is to give a positive answer to that question:

\begin{theorem} \label{th:main}
  A nonnegative function $f \in L^2(\Sp^2)$ is a global maximizer for~\eqref{eq:aFriS}
  if and only if it is a non zero constant,
  and we have
  \begin{equation*}
    \mR = \frac{\Norm{\widehat{\uno\sigma}}_{L^4(\R^3)}}{\norm{\uno}_{L^2(\Sp^2)}} = 2 \pi.
  \end{equation*}
\end{theorem}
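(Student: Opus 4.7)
The plan is to recast the $L^4$ inequality as a sharp $L^2$ bound on the convolution $f\sigma \conv f\sigma$ via Plancherel, to exploit the geometric structure of the delta constraint on the sphere, and then to establish the sharp quadrilinear estimate.

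Since $\Abs{\widehat{f\sigma}} \le \widehat{\abs{f}\sigma}$ pointwise, I may assume $f \ge 0$. From $(\widehat{f\sigma})^2 = \widehat{f\sigma \conv f\sigma}$ and Plancherel,
\begin{equation*}
  \Norm{\widehat{f\sigma}}_{L^4(\R^3)}^4 = (2\pi)^3 \Norm{f\sigma \conv f\sigma}_{L^2(\R^3)}^2 = (2\pi)^3\, T(f),
\end{equation*}
where
\begin{equation*}
  T(f) := \int_{(\Sp^2)^4} f(\omega_1) f(\omega_2) f(\omega_3) f(\omega_4)\, \delta(\omega_1+\omega_2-\omega_3-\omega_4)\, \d\sigma^{\tensor 4}.
\end{equation*}
The theorem then reduces to $T(f) \le 2\pi \Norm{f}_{L^2(\Sp^2)}^4$ with equality iff $f$ is a positive constant. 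The constant is correct: using the explicit formula $(\sigma \conv \sigma)(\xi) = 2\pi \abs{\xi}^{-1}$ for $\abs{\xi}\le 2$, a direct computation gives $T(\uno) = \Norm{\sigma \conv \sigma}_{L^2(\R^3)}^2 = 32\pi^3 = 2\pi\, \Norm{\uno}_{L^2(\Sp^2)}^4$, which yields $\mR = 2\pi$.

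A crucial geometric observation is that on the support of the delta, the four points $\omega_1, \omega_3, \omega_2, \omega_4$ are the vertices of a rectangle inscribed in $\Sp^2$: the condition $\omega_1+\omega_2 = \omega_3+\omega_4$ forces the diagonals $\omega_1\omega_2$ and $\omega_3\omega_4$ to share a midpoint, and $\abs{\omega_i}=1$ forces them to have equal length. Applying pointwise Cauchy--Schwarz on the circle $\insieme{(\omega,\omega')\in(\Sp^2)^2}{\omega+\omega'=\xi}$ gives
\begin{equation*}
  (f\sigma \conv f\sigma)(\xi)^2 \le (\sigma \conv \sigma)(\xi) \cdot (f^2\sigma \conv f^2\sigma)(\xi),
\end{equation*}
so that $T(f) \le 2\pi \int_{(\Sp^2)^2} f^2(\omega) f^2(\omega') \abs{\omega+\omega'}^{-1}\d\sigma\,\d\sigma$. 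A complementary bound with kernel $\abs{\omega-\omega'}^{-1}$ arises from the alternative rectangle-diagonal grouping of indices (formally from $2 f_1 f_2 f_3 f_4 \le (f_1 f_3)^2 + (f_2 f_4)^2$).

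The main obstacle is that neither bound is sharp in isolation: the kernel $\abs{\omega\pm\omega'}^{-1}$ is non-integrably singular, so the right-hand side can be made arbitrarily large by concentrating $f^2$ near antipodal (or coincident) pairs, while $T(f)$ itself remains bounded by $2\pi \Norm{f}_{L^2}^4$. The essential technical step will be to combine the two symmetric bounds---plausibly using the parallelogram identity $\abs{\omega+\omega'}^2+\abs{\omega-\omega'}^2=4$ on $\Sp^2\times\Sp^2$, which reflects the rectangle structure---so that the singular contributions cancel and the sharp constant $2\pi$ emerges. Tracing back the equality conditions in each Cauchy--Schwarz step then forces $f$ to be constant on each slice $C_\xi$ of the constraint, which integrated over all $\xi$ pins $f$ down to a genuine constant and completes the proof.
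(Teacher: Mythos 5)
Your outline correctly identifies the first moves (Plancherel, the constraint manifold, the naive Cauchy--Schwarz and its singular kernel, the computation $T(\uno)=32\pi^3$), but the proposal stops exactly where the genuine work begins, and the two devices you gesture at are not the right ones.

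First, the parallelogram identity $\abs{\omega+\omega'}^2+\abs{\omega-\omega'}^2=4$ holds unconditionally for any two unit vectors, so it contains no information about the constraint surface and cannot desingularize anything by itself. The identity the paper actually needs is a consequence of the \emph{four}-vector constraint: if $\omega_1+\omega_2+\omega_3+\omega_4=0$ with all $\omega_i\in\Sp^2$, then
\begin{equation*}
\abs{\omega_1+\omega_2}\abs{\omega_3+\omega_4}+\abs{\omega_1+\omega_3}\abs{\omega_2+\omega_4}+\abs{\omega_1+\omega_4}\abs{\omega_2+\omega_3}=4.
\end{equation*}
This is used \emph{multiplicatively}: by the full symmetry of $Q$ one writes $Q(f,f,f,f)=\frac34\int_\Gamma f(\omega_1)f(\omega_2)\abs{\omega_1+\omega_2}\,f(\omega_3)f(\omega_4)\abs{\omega_3+\omega_4}\d\Sigma$, and only \emph{then} applies Cauchy--Schwarz. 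The weight $\abs{\omega_1+\omega_2}$ cancels the Jacobian factor $\abs{\omega_1+\omega_2}^{-1}$ coming from $\sigma\conv\sigma$, so the kernel that survives is $\abs{\omega_1+\omega_2}$, which is bounded. Your plan of ``combining two singular bounds so the singularities cancel'' is not the mechanism; both of your bounds are equally non-integrable and averaging them does not help.

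Second, and independently, even after the kernel has been tamed you are left with proving $\iint_{(\Sp^2)^2} f^2(\omega)f^2(\nu)\abs{\omega-\nu}\d\sigma\d\sigma \le \tfrac{4}{3}\norm{f}_{L^2}^4$ with equality iff $f$ is constant. This is not a Cauchy--Schwarz fact: crudely bounding $\abs{\omega-\nu}\le 2$ overshoots by a factor $3/2$. The paper settles it with a spherical-harmonics expansion, Funk--Hecke, and a sign analysis of the Legendre coefficients $\Lambda_k=\int_{-1}^1\sqrt{2-2t}\,P_k(t)\d t<0$ for $k\ge 1$. This entire spectral step is absent from your proposal, yet it is where the sharp constant $2\pi$ and the uniqueness of maximizers are actually proved; ``tracing back equality conditions in Cauchy--Schwarz'' cannot substitute for it, because Cauchy--Schwarz alone does not yield the sharp inequality. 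A minor additional point: the reduction to $f\ge 0$ is not via the pointwise bound $\abs{\widehat{f\sigma}}\le\widehat{\abs{f}\sigma}$ (which is false in general); it comes from positivity of the quadrilinear form together with the antipodal symmetrization $f\mapsto f_\sharp$, for which the paper proves $\abs{f\sigma\conv f_\star\sigma}\le f_\sharp\sigma\conv f_\sharp\sigma$ pointwise.
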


When we combine Theorem~\ref{th:main} with the results of \cite[Theorem 1.2]{CS2}
we obtain that \emph{all} complex valued global maximizers for \eqref{eq:aFriS}
are of the form
\begin{equation*}
  f(\omega) = k \Eu^{\iC \theta} \Eu^{\iC \xi \cdot \omega},
\end{equation*}
for some $k > 0$, $\theta \in \R$, $\xi \in \R^3$.

A large part of the analysis carried out in \cite{CS} is local in nature
and it is based on a comparison between the case of the sphere and
that of a paraboloid which approximates the sphere at one point.
Here we are able to keep everything global, thanks to 
an interesting geometric feature of the sphere,
which is expressed in~Lemma~\ref{le:1234}.
It essentially says:
when the sum $\omega_1 + \omega_2 + \omega_3$ of three unit vectors
is again a unit vector, then we have
\begin{equation*}
  \abs{\omega_1 + \omega_2}^2 + \abs{\omega_1 + \omega_3}^2 + \abs{\omega_2 + \omega_3}^2  = 4.
\end{equation*}

In order to find maximizers for \eqref{eq:aFriS},
we follow the spirit of the proof of analogous results
obtained by the author for the paraboloid and the cone~\cite{Fo}.
The main steps are:
\begin{itemize}
\item The exponent $4$ is an even integer and 
  we can view the $L^4$ norm as a~$L^2$ norm of a product,
  which becomes, through the Fourier transform, a $L^2$ norm of a convolution.
  We write the $L^2$ norm of a convolution of measures supported on the sphere
  as a quadrilinear integral over a submanifold of~$(\Sp^2)^4$.
\item A careful application of the Cauchy-Schwarz inequality over that submanifold allows us
  to control the quadrilinear integral by some bilinear integral over $(\Sp^2)^2$.
\item Finally, by a spectral decomposition of the bilinear integral
  using spherical harmonics will show that the optimal bounds for the bilinear integral
  are obtained when we consider constant data.
\end{itemize}
We will see that every time an inequality appears,
the choice of $f$ constant will correspond to the case of equality.

\section{Quadrilinear form associated to the estimate}

\begin{definition}
  Given a complex valued function $f$ defined on $\Sp^2$,
  its \emph{antipodally conjugate} $f_\star$ is defined
  by $f_\star(\omega) := \ovl{f(-\omega)}$.
\end{definition}

By Plancherel's theorem we have
\begin{multline} \label{eq:A}
  \norm{\widehat{f \sigma}}_{L^4(\R^3)}^2 =
  \norm{\widehat{f \sigma} \ovl{\widehat{f \sigma}}}_{L^2(\R^3)} =
  \norm{\widehat{f \sigma} \widehat{f_\star \sigma}}_{L^2(\R^3)} = \\
  = \norm{\widehat{f \sigma \conv f_\star \sigma}}_{L^2(\R^3)} = 
  (2 \pi)^{\frac32} \Norm{f \sigma \conv f_\star \sigma}_{L^2(\R^3)}.
\end{multline}
When $f$ is constant we can explicitely compute this convolution.

\begin{lemma} \label{le:sigma}
  For $x \in \R^3$ we have
  \begin{equation*}
    \sigma \conv \sigma (x) =
    \iint_{(\Sp^2)^2} \ddirac{x - \omega - \nu} \d\sigma_\omega \d\sigma_\nu =
    \frac{2 \pi}{\abs{x}} \chi\Tonde{\abs{x} \le 2},
  \end{equation*}
  with norm $\norm{\sigma \conv \sigma}_{L^2(\R^3)} = 2^{5/2} \pi^{3/2}$.
\end{lemma}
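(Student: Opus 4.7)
Since $\sigma$ is rotation invariant, so is the measure $\sigma \conv \sigma$, and its Lebesgue density (once we know it is a function) must be radial. The plan is to identify this density by pairing $\sigma \conv \sigma$ against an arbitrary radial test function $\phi(x) = \tilde{\phi}(\abs{x})$ and comparing with the desired form $\int \phi \cdot g(\abs{x}) \d x$. Working from the pairing rather than from the formal delta expression also takes care of the only conceptual subtlety in the statement.

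The main computation is the inner integral
\begin{equation*}
  J(\omega) := \int_{\Sp^2} \tilde{\phi}(\abs{\omega + \nu}) \d\sigma_\nu
\end{equation*}
for fixed $\omega \in \Sp^2$. Using spherical coordinates on $\Sp^2$ with $\omega$ as north pole so that $\omega \cdot \nu = \cos\alpha$, one has $\abs{\omega + \nu}^2 = 2 + 2\cos\alpha$, and $J(\omega)$ reduces to $2\pi \int_0^\pi \tilde{\phi}\Tonde{\sqrt{2 + 2\cos\alpha}} \sin\alpha \d\alpha$. The substitution $s = \sqrt{2 + 2\cos\alpha}$, with $s \in [0, 2]$ and $\sin\alpha \d\alpha = -s \d s$, then yields $J(\omega) = 2\pi \int_0^2 s \tilde{\phi}(s) \d s$, manifestly independent of $\omega$ as rotational invariance demands. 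Integrating over $\omega$ produces the factor $\sigma(\Sp^2) = 4\pi$, so the pairing equals $8\pi^2 \int_0^2 s \tilde{\phi}(s) \d s$; matching this with $\int \phi \cdot g(\abs{x}) \d x = 4\pi \int_0^\infty \tilde{\phi}(r) g(r) r^2 \d r$ forces $g(r) = 2\pi/r$ on $[0,2]$ and $g(r) = 0$ elsewhere.

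The norm calculation is then immediate by radial integration:
\begin{equation*}
  \Norm{\sigma \conv \sigma}_{L^2(\R^3)}^2
  = 4\pi \int_0^2 \TOnde{\frac{2\pi}{r}}^2 r^2 \d r = 32 \pi^3,
\end{equation*}
hence $\Norm{\sigma \conv \sigma}_{L^2} = 2^{5/2} \pi^{3/2}$. There is no real obstacle here; the whole argument hinges on the single change of variables $s = \sqrt{2 + 2\cos\alpha}$, which simultaneously linearizes $\abs{\omega + \nu}$ and absorbs the spherical Jacobian.
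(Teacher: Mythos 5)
Your proof is correct, and it takes a somewhat different (cleaner) route than the paper's. The paper writes $\d\sigma_\omega = 2\ddirac{1 - \abs{\omega}^2}\d\omega$ and then manipulates the convolution as a pointwise delta integral, obtaining
\begin{equation*}
  \sigma \conv \sigma(x) = 2\int_{\Sp^2} \ddirac{1 - \abs{x - \omega}^2}\d\sigma_\omega
  = 2\int_{\Sp^2} \ddirac{2x\cdot\omega - \abs{x}^2}\d\sigma_\omega,
\end{equation*}
which it evaluates via spherical coordinates and the substitution $c = \cos\theta$. This is a physicist-style delta calculus: correct, but formally delicate (one must keep track of the scaling $\ddirac{\lambda t} = \abs{\lambda}^{-1}\ddirac{t}$ and of why a one-dimensional delta integrated over a two-sphere gives a finite answer). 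You instead work through the duality pairing $\langle\sigma\conv\sigma,\phi\rangle = \iint\phi(\omega+\nu)\d\sigma_\omega\d\sigma_\nu$, reduce to radial test functions by rotation invariance, and identify the density through the substitution $s = \sqrt{2+2\cos\alpha}$, which absorbs the spherical Jacobian in one stroke. The two computations are morally the same change of variables, but your framing sidesteps the delta bookkeeping entirely and simultaneously settles absolute continuity of $\sigma\conv\sigma$, which the paper's presentation takes for granted. The one small thing you could spell out is why testing against radial $\phi$ alone suffices (rotation averaging of a general test function); you allude to this in your opening sentence, so the gap is cosmetic rather than substantive. The norm computation is identical in both.
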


The notation $\ddirac{\cdot}$ stands for
the Dirac's delta measure concentrated at the origin of~$\R^n$.

\begin{proof}
  The surface measure of the sphere can be written as
  \begin{equation*}
    \d\sigma_\omega = \ddirac{1 - \abs{\omega}} \d\omega =
    2 \ddirac{1 - \abs{\omega}^2} \d\omega.
  \end{equation*}
  The convolution then can be written as
  \begin{multline*}
    \sigma \conv \sigma (x) = 2 \int_{\Sp^2} \ddirac{1 - \abs{x - \omega}^2} \d\sigma_\omega =
    2 \int_{\Sp^2} \ddirac{2 x \cdot \omega - \abs{x}^2} \d\sigma_\omega = \\
    = \frac{2 \pi}{\abs{x}} \int_0^\pi \ddirac{\cos\theta - \frac{\abs{x}}2} \sin\theta \d\theta =
    \frac{2 \pi}{\abs{x}} \int_{-1}^1 \ddirac{c - \frac{\abs{x}}2} \d c 
    = \frac{2 \pi}{\abs{x}} \chi\tonde{\frac{\abs{x}}2 \le 1}.
  \end{multline*}
  The norm can then be easily computed,
  \begin{equation*}
    \norm{\sigma \conv \sigma}_{L^2(\R^3)}^2 = 4 \pi^2 \int_{\abs{x} \le 2} \frac{\d x}{\abs{x}^2} =
    4 \pi^2 4 \pi \int_0^2 \d r = 32 \pi^3.
  \end{equation*}
\end{proof}

For a generic data $f$, we can write the convolution in \eqref{eq:A} as
\begin{equation*}
  f \sigma \conv f_\star \sigma (x) = \iint_{\Sp^2 \times \Sp^2}  f(\omega) \ovl{f(-\nu)}
  \ddirac{x - \omega - \nu} \d\sigma_\omega \d\sigma_\nu.
\end{equation*}
The $L^2$ norm of the convolution can be written as a quadrilinear integral
\begin{multline} \label{eq:B}
  \Norm{f \sigma \conv f_\star \sigma}_{L^2(\R^3)}^2 = \\
  = \int_{(\Sp^2)^4} f(\omega_1) \ovl{f(-\nu_1)} \,\ovl{f(\omega_2)} f(-\nu_2)
  \ddirac{\omega_1 + \nu_1 - \omega_2 - \nu_2} 
  \d\sigma_{\omega_1} \d\sigma_{\nu_1} \d\sigma_{\omega_2} \d\sigma_{\nu_2} = \\
  = \int f(\omega_1) \ovl{f(-\omega_2)} f(\omega_3) \ovl{f(-\omega_4)} \d\Sigma_\omega =
  Q(f, f_\star, f, f_\star),
\end{multline}
where the measure $\Sigma$ is given by
\begin{equation} \label{eq:Sigma}
  \d\Sigma_{(\omega_1, \omega_2, \omega_3, \omega_4)} :=
  \ddirac{\omega_1 + \omega_2 + \omega_3 + \omega_4} 
  \d\sigma_{\omega_1} \d\sigma_{\omega_2} \d\sigma_{\omega_3} \d\sigma_{\omega_4},
\end{equation}
and $Q$ is the quadrilinear form defined by
\begin{equation} \label{eq:defQ}
  Q(f_1, f_2, f_3, f_4) :=
  \int_\Gamma f_1(\omega_1) f_2(\omega_2) f_3(\omega_3) f_4(\omega_4) \d\Sigma_\omega.
\end{equation}
Observe that $Q$ is fully symmetric in its arguments.

\begin{remark}
  The positive measure $\Sigma$ defined in \eqref{eq:Sigma}
  is supported on the (singular) submanifold~$\Gamma$ of $(\Sp^2)^4$ of (generic) dimension $5$
  given by
  \begin{equation*}
    \Gamma := \insieme{(\omega_1, \omega_2, \omega_3, \omega_4) \in (\Sp^2)^4}
    {\omega_1 + \omega_2 + \omega_3 +\omega_4 = 0}.
  \end{equation*}
  One way to visualize and parametrize $\Gamma$ is
  to choose freely the unit vectors~$\omega_1$ and~$\omega_2$,
  then~$\omega_3$ and~$\omega_4$ must be two diametrically opposite points on the circle
  obtained intersecting the unit sphere centered at~$0$
  with the unit sphere centered at~$-\omega_1-\omega_2$ (see Figure~\ref{fi:Gamma}).
  \begin{figure}[htbp]
    \centering

    \begin{tikzpicture}[scale = 3.2, >=latex]
      \coordinate (o) at (0,0);
      \coordinate (w1) at (-0.6,0.8);
      \coordinate (w2) at (-0.6,-0.8);
      \coordinate (w3) at (0.45,0.3);
      \coordinate (w4) at (0.75,-0.3);
      \coordinate (c) at (1.2,0);
      \coordinate (mw1) at (0.6,-0.8);
      \coordinate (mw2) at (0.6,0.8);
      \coordinate (m) at (0.6,0);
      \fill[fill opacity=0.25, ball color=white!05] (o) circle (1);
      \fill[fill opacity=0.08, ball color=white!05] (c) circle (1);
      \draw[thick,->] (o) -- (w1) node[left] {$\omega_1$};
      \draw[thick,->] (o) -- (w2) node[left] {$\omega_2$};
      \draw[thick,->] (o) -- (w3) node[right] {$\omega_3$};
      \draw[thick,->] (o) -- (w4) node[right] {$\omega_4$};
      \draw[dashed] (o) -- (mw1) node[below] {$-\omega_1$} -- (c);
      \draw[dashed] (o) -- (mw2) node[above] {$-\omega_2$} -- (c);
      \draw[dashed] (w3) -- (c);
      \draw[dashed] (w4) -- (c) node[right] {$-\omega_1-\omega_2 = \omega_3+\omega_4$};
      \draw (m) ellipse (0.16 and 0.8);
      \draw[dotted] (o) -- (c) (mw1) -- (mw2) (w3) -- (w4);
    \end{tikzpicture}    
    \caption{Parametrization of the manifold $\Gamma$} \label{fi:Gamma}
  \end{figure}
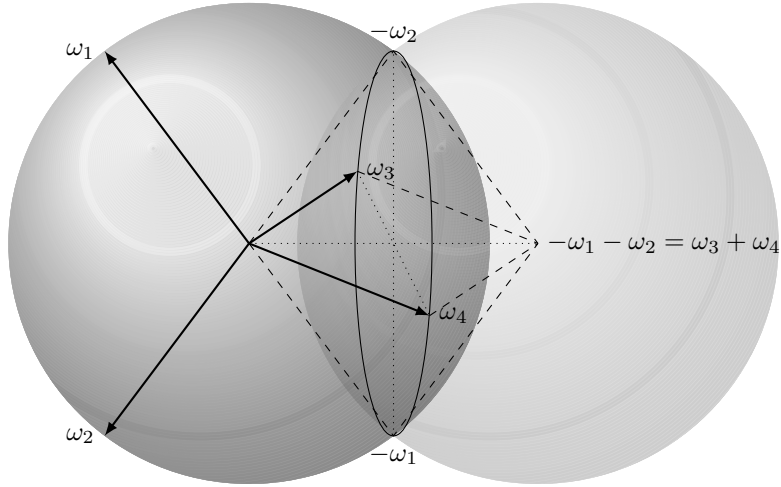
\end{remark}

\section{Symmetrization}

It is evident that
$\Abs{Q(f_1, f_2, f_3, f_4)} \le Q\Tonde{\abs{f_1}, \abs{f_2}, \abs{f_3}, \abs{f_4}}$,
with equality when the functions~$f_k$ are nonnegative.
Hence, we can reduce to consider nonnegative functions only.
We may say more.

\begin{definition}
  Given a complex valued function $f$ defined on $\Sp^2$
  we define its \emph{nonnegative antipodally symmetric rearrangement} $f_\sharp$ by
  \begin{equation*}
    f_\sharp(\omega) := \sqrt{\frac{\abs{f(\omega)}^2 + \abs{f(-\omega)}^2}2},
    \qquad \omega \in \Sp^2.
  \end{equation*}
The function $f_\sharp$ is also uniquely determined by the conditions
\begin{equation*}
  f_\sharp(\omega) = f_\sharp(-\omega) \ge 0, \qquad 
  f_\sharp(\omega)^2 + f_\sharp(-\omega)^2 = \abs{f(\omega)}^2 + \abs{f(-\omega)}^2
\end{equation*}
Moreover, we have $\norm{f_\sharp}_{L^2(\Sp^2)} = \norm{f}_{L^2(\Sp^2)}$.
\end{definition}

\begin{proposition} \label{pr:ff}
  We always have the pointwise estimate 
  \begin{equation} \label{eq:pointwise}
    \abs{f \sigma \conv f_\star \sigma (x)} \le f_\sharp \sigma \conv f_\sharp \sigma (x),
    \qquad \forall x \in \R^3.
  \end{equation}
\end{proposition}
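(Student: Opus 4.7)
The plan is to rewrite the convolution on the left as an integral over the two-dimensional set $C_x := \{(\omega,\mu) \in (\Sp^2)^2 : \omega - \mu = x\}$, symmetrize using a natural antipodal involution on $C_x$, and then apply Cauchy--Schwarz on two-component vectors $(|f(\omega)|,|f(-\omega)|)$ and $(|f(\mu)|,|f(-\mu)|)$ to recognise the right-hand side of \eqref{eq:pointwise}.

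The first step is a substitution: using the definition $f_\star(\nu) = \overline{f(-\nu)}$ and the antipodal symmetry of $\sigma$, I change variables $\mu = -\nu$ in the defining integral to obtain
\begin{equation*}
  f\sigma \conv f_\star \sigma (x) = \iint_{(\Sp^2)^2} f(\omega)\ovl{f(\mu)}\,\ddirac{\omega - \mu - x}\d\sigma_\omega\d\sigma_\mu.
\end{equation*}
Exactly the same substitution applied to $f_\sharp\sigma\conv f_\sharp\sigma$, together with $f_\sharp(-\mu) = f_\sharp(\mu)$, rewrites the right-hand side of \eqref{eq:pointwise} as the integral of $f_\sharp(\omega)f_\sharp(\mu)$ against the same delta measure over $(\Sp^2)^2$. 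This common parametrization is what makes a pointwise comparison of integrands possible.

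Next I exploit the involution $T \colon (\omega,\mu) \mapsto (-\mu,-\omega)$, which preserves both the product measure $\d\sigma_\omega\d\sigma_\mu$ and the constraint $\omega - \mu = x$. Averaging the integrand over $T$ gives
\begin{equation*}
  f\sigma\conv f_\star\sigma(x) = \frac12 \iint \bigl[f(\omega)\ovl{f(\mu)} + f(-\mu)\ovl{f(-\omega)}\bigr]\,\ddirac{\omega-\mu-x}\d\sigma_\omega\d\sigma_\mu,
\end{equation*}
and the triangle inequality bounds the bracketed quantity by $|f(\omega)||f(\mu)| + |f(-\omega)||f(-\mu)|$. The key numerical estimate is then the Cauchy--Schwarz inequality in $\R^2$,
\begin{equation*}
  |f(\omega)||f(\mu)| + |f(-\omega)||f(-\mu)| \le \sqrt{|f(\omega)|^2 + |f(-\omega)|^2}\,\sqrt{|f(\mu)|^2 + |f(-\mu)|^2},
\end{equation*}
whose right-hand side is exactly $2 f_\sharp(\omega) f_\sharp(\mu)$. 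Inserting this bound into the symmetrized integral cancels the factor $1/2$ and yields \eqref{eq:pointwise}.

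There is no real obstacle here; the only thing that needs care is identifying the right involution on $C_x$. The naïve symmetry $(\omega,\mu)\mapsto(\mu,\omega)$ would not preserve the constraint $\omega-\mu=x$, and swapping antipodes on a single factor would not either. Only the \emph{combined} swap--and--antipode map $T$ preserves $C_x$, and it is precisely this map that produces the cross term $f(-\mu)\ovl{f(-\omega)}$ required to match the $f_\sharp$'s after Cauchy--Schwarz.
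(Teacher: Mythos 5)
Your proof is correct and is essentially the paper's proof in different notation: after the change of variables $\mu=-\nu$, your involution $T\colon(\omega,\mu)\mapsto(-\mu,-\omega)$ is exactly the swap $(\omega,\nu)\mapsto(\nu,\omega)$ in the original coordinates, so your symmetrization step is the paper's use of commutativity of convolution, and the key estimate is the same two-dimensional Cauchy--Schwarz inequality producing $2f_\sharp(\omega)f_\sharp(\mu)$. The only cosmetic difference is that the paper first reduces to $f\ge 0$ while you carry the triangle inequality through explicitly; both are equivalent.
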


By \eqref{eq:A} and~\eqref{eq:B} the proposition immediately implies:

\begin{corollary}[\cite{CS}] \label{co:ffff}
  We always have that
  \begin{equation*}
    Q(f, f_\star, f, f_\star) \le Q(f_\sharp, f_\sharp, f_\sharp, f_\sharp)
    \quad \text{and} \quad
    \Norm{\widehat{f \sigma}}_{L^4(\R^3)} \le \Norm{\widehat{f_\sharp \sigma}}_{L^4(\R^3)}.
  \end{equation*}
\end{corollary}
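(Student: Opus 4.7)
The corollary is a short deduction from Proposition~\ref{pr:ff} through the two identities \eqref{eq:A} and \eqref{eq:B}. My plan is, first, to pass the pointwise bound to $L^2(\R^3)$; second, to rewrite each side as a value of $Q$; and third, to rephrase the resulting $L^2$ inequality in terms of $L^4$ norms of Fourier transforms.

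For the first step, I would use that the right-hand side of \eqref{eq:pointwise} is nonnegative, so monotonicity of the $L^2(\R^3)$ norm on nonnegative functions promotes the pointwise domination to
\[
  \Norm{f \sigma \conv f_\star \sigma}_{L^2(\R^3)} \le \Norm{f_\sharp \sigma \conv f_\sharp \sigma}_{L^2(\R^3)}.
\]
This is the only analytic content of the argument; everything substantive is already absorbed into Proposition~\ref{pr:ff}.

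For the second step, I would observe that by construction $f_\sharp$ is real, nonnegative, and satisfies $f_\sharp(-\omega) = f_\sharp(\omega)$, hence $(f_\sharp)_\star = f_\sharp$. Applying identity \eqref{eq:B} to $f$ on the left and to $f_\sharp$ on the right, and squaring the $L^2$ inequality above, I get
\[
  Q(f, f_\star, f, f_\star) = \Norm{f \sigma \conv f_\star \sigma}_{L^2(\R^3)}^2 \le \Norm{f_\sharp \sigma \conv f_\sharp \sigma}_{L^2(\R^3)}^2 = Q(f_\sharp, f_\sharp, f_\sharp, f_\sharp),
\]
which is the first of the two claimed inequalities.

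For the third step, I would invoke \eqref{eq:A}, namely $\Norm{\widehat{g\sigma}}_{L^4(\R^3)}^2 = (2\pi)^{3/2} \Norm{g \sigma \conv g_\star \sigma}_{L^2(\R^3)}$ for every $g \in L^2(\Sp^2)$, first with $g = f$ and then with $g = f_\sharp$ (using again $(f_\sharp)_\star = f_\sharp$). Combining with the $L^2$ convolution bound of the first step, the multiplicative factor $(2\pi)^{3/2}$ cancels on both sides and the inequality $\Norm{\widehat{f \sigma}}_{L^4(\R^3)} \le \Norm{\widehat{f_\sharp \sigma}}_{L^4(\R^3)}$ follows at once. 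There is essentially no obstacle at this stage: the corollary is a direct rearrangement of Proposition~\ref{pr:ff} once the two Plancherel-type identities \eqref{eq:A} and \eqref{eq:B} are in hand.
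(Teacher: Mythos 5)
Your proposal is correct and takes essentially the same route as the paper, which states that Corollary~\ref{co:ffff} follows immediately from Proposition~\ref{pr:ff} via the identities \eqref{eq:A} and \eqref{eq:B}; you have simply written out the steps (monotonicity of the $L^2$ norm applied to the pointwise bound, the observation that $(f_\sharp)_\star = f_\sharp$, and the translation through \eqref{eq:A} and \eqref{eq:B}) in full detail.
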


We also have equality when $f$ is a nonnegative constant function,
since in that case~\mbox{$f = f_\star = f_\sharp$}.
Corollary~\ref{co:ffff} was proved in~\cite{CS},
our proof here is much shorter and simpler.

\begin{proof}[Proof of Proposition \ref{pr:ff}]
  We may assume that $f$ is nonnegative.
  By the symmetry of the convolution,
  \begin{multline} \label{eq:fcf}
    2 f \sigma \conv f_\star \sigma (x) =
    f \sigma \conv f_\star \sigma (x) + f_\star \sigma \conv f \sigma (x) = \\
    = \int_{(\Sp^2)^2} \TOnde{f(\omega) f(-\nu) + f(-\omega) f(\nu)}
    \ddirac{x - \omega - \nu} \d\sigma_\omega \d\sigma_\nu.
  \end{multline}
  Now we use Cauchy-Schwarz inequality in its simplest form:
  \begin{equation} \label{eq:ABCD}
    A C + B D \le \sqrt{A^2 + B^2} \sqrt{C^2 + D^2},
  \end{equation}
  applied with $A = f(\omega)$, $B = f(-\omega)$, $C = f(-\nu)$, $D = f(\nu)$.
  We obtain
  \begin{equation*}
    f(\omega) f(-\nu) + f(-\omega) f(\nu) \le 2 f_\sharp(\omega) f_\sharp(\nu). 
  \end{equation*}
  We plug this into \eqref{eq:fcf} and obtain \eqref{eq:pointwise}.
\end{proof}

\begin{remark} \label{re:eqQ}
  When $A, B, C, D \ge 0$, we have equality in \eqref{eq:ABCD} if and only if~\mbox{$A D = B C$}.
  Suppose now that the equality~$Q(f, f_\star, f, f_\star) = Q(f_\sharp, f_\sharp, f_\sharp, f_\sharp)$
  holds for some nonnegative function $f$.
  It follows from the proof of Proposition \ref{pr:ff} that
  \begin{equation*}
    f(\omega) f(\nu) = f(-\omega) f(-\nu),
  \end{equation*}
  for almost every~$(\omega, \nu) \in (\Sp^2)^2$.
  If we integrate this identity with respect to~$\nu$,
  we obtain that~\mbox{$f(\omega) = f(-\omega)$} for almost every~$\omega \in \Sp^2$,
  which means that~$f = f_\star$ is antipodally symmetric.
\end{remark}

From now on, we may assume that $f = f_\sharp$ is a nonnegative antipodally symmetric function.

\section{Reduction to a quadratic form estimate}

Our goal now is to bound $Q(f,f,f,f)$ in terms of the $L^2$ norm of $f$.
We may try to use Cauchy-Schwarz inequality with respect to the measure $\Sigma$.

\begin{lemma} \label{le:BCS}
  Let $B(F, G)$ be the bilinear form given by
  \begin{equation*}
    B(F, G) = \int_\Gamma F(\omega_1, \omega_2) G(\omega_3, \omega_4) \d\Sigma_\omega,
  \end{equation*}
  for functions $F$ and $G$ defined on $\Sp^2 \times \Sp^2$.
  Then
  \begin{equation*}
    \Abs{B(F, G)}^2 \le B\Tonde{\abs{F}^2, \uno} B\Tonde{\abs{G}^2, \uno},
  \end{equation*}
  with equality if and only if there exist
  two constants $\lambda$, $\mu$ and a measurable function~$h(x)$
  defined on $\abs{x} \le 2$ such that
  \begin{equation*}
    F(\omega, \nu) = \lambda h(\omega + \nu), \quad
    G(\omega, \nu) = \mu h(-\omega -\nu), \quad
    \text{for almost every } \omega, \nu \in \Sp^2.
  \end{equation*}
\end{lemma}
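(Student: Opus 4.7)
The plan is to obtain the inequality as a direct application of the Cauchy--Schwarz inequality in the Hilbert space $L^2(\Gamma, \Sigma)$, viewing $F(\omega_1,\omega_2)$ and $G(\omega_3,\omega_4)$ as functions on $\Gamma$ that happen to depend only on half of the variables. Since
\begin{equation*}
  \int_\Gamma |F(\omega_1,\omega_2)|^2 \d\Sigma_\omega = B(|F|^2, \uno),
\end{equation*}
and similarly for $G$, Cauchy--Schwarz immediately yields
\begin{equation*}
  |B(F,G)|^2 = \Abs{\int_\Gamma F \cdot G \d\Sigma_\omega}^2 \le B(|F|^2,\uno) \, B(|G|^2,\uno).
\end{equation*}
The reverse direction of the equality claim (that data of the stated form realise equality) follows by direct substitution: on $\Gamma$ we have $\omega_1+\omega_2 = -\omega_3-\omega_4$, so $F(\omega_1,\omega_2) G(\omega_3,\omega_4) = \lambda\mu \, h(\omega_1+\omega_2)^2$, while both $B(|F|^2,\uno)$ and $B(|G|^2,\uno)$ reduce to scalar multiples of the same integral of $|h|^2$ against $\Sigma$, so Cauchy--Schwarz saturates.

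The forward direction of the equality claim is the heart of the matter. The standard Cauchy--Schwarz equality condition provides a complex scalar $c$ such that $F(\omega_1,\omega_2) = c\,\ovl{G(\omega_3,\omega_4)}$ for $\Sigma$-a.e.\ $(\omega_1,\omega_2,\omega_3,\omega_4) \in \Gamma$; the degenerate cases where one function vanishes identically correspond to $\lambda=0$ or $\mu=0$. The main step is to translate this relation, which a priori couples the two pairs of variables, into the product-form conclusion of the lemma. For this I would exploit the natural disintegration of $\Sigma$ along the map $\pi \colon (\omega_1,\omega_2,\omega_3,\omega_4) \mapsto \omega_1+\omega_2$: for almost every $x \in \R^3$ with $|x| \le 2$, the fiber $\pi^{-1}(x)$ is a direct product of the ``circle'' $C_x := \insieme{(\omega,\nu) \in (\Sp^2)^2}{\omega+\nu = x}$ with $C_{-x}$, carrying the corresponding product of fiber measures derived from $\sigma\conv\sigma$.

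On each such product fiber the identity $F(\omega_1,\omega_2) = c\,\ovl{G(\omega_3,\omega_4)}$ has its left side depending only on the $C_x$ coordinates and its right side only on the $C_{-x}$ coordinates; by the product structure, both sides must therefore be constant on the fiber for a.e.\ $x$. Consequently $F(\omega,\nu)$ depends only on $\omega+\nu$ and likewise for $G$, so we may write $F(\omega,\nu) = \lambda h(\omega+\nu)$ for a suitable measurable $h$ defined on $\graffe{|x|\le 2}$, and the proportionality then pins down $G(\omega,\nu)$ as the required scalar multiple of $h(-\omega-\nu)$. The principal difficulty is the fiber disintegration step: one must justify rigorously that $\Sigma$ factors as a genuine product of circle measures over $\pi$, so that Fubini can be invoked to conclude that each side of the proportionality is separately constant on a.e.\ fiber. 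The Cauchy--Schwarz step itself is otherwise routine.
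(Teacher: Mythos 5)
Your proof is correct and takes essentially the same approach as the paper: Cauchy--Schwarz applied to $F\tensor\uno$ and $\uno\tensor G$ with respect to $\Sigma$. The paper's treatment of the necessity direction of the equality characterization is terser --- it simply asserts that linear dependence of $F(\omega_1,\omega_2)$ and $G(\omega_3,\omega_4)$ on $\Gamma$ forces both to depend only on $\omega_1+\omega_2 = -\omega_3-\omega_4$ --- whereas you supply the underlying mechanism via the disintegration of $\Sigma$ over the map $\pi$ into product fibers $C_x\times C_{-x}$ and a Fubini argument on each fiber. That is exactly the justification the paper leaves implicit, so you are filling a real (if small) gap rather than taking a different route. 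One point that you flag but do not fully carry through, and which the paper's own statement also glosses over: Cauchy--Schwarz equality actually gives $F = c\,\ovl{G}$ on $\Gamma$ (note the conjugate), so the fiber argument yields $G(\omega,\nu)$ as a scalar multiple of $\ovl{h(-\omega-\nu)}$ rather than of $h(-\omega-\nu)$; the two forms coincide only when $h$ has constant argument. Correspondingly, the sufficiency direction as stated does not hold for a general complex-valued $h$ (your ``direct substitution'' only saturates the triangle-inequality step when $h^2$ has constant argument against the push-forward of $\Sigma$). This discrepancy is harmless for the paper, since the lemma is only applied with $F$ and $G$ real and nonnegative, where $h$ may be taken real.
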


\begin{proof}
  Apply Cauchy-Schwarz inequality
  to the product of~$F \tensor \uno$ and~$\uno \tensor G$
  with respect to the measure~$\Sigma$. 
  We have equality when~$F \tensor \uno$ and~$\uno \tensor G$ are linearly dependent
  on the support of~$\Sigma$.
  If~$F$ and $G$ are not identically zero,
  that happens when there are non zero constants~$\lambda$, $\mu$ such that
  \begin{equation*}
      \frac{F(\omega_1, \omega_2)}\lambda = \frac{G(\omega_3, \omega_4)}\mu =: h(x),
  \end{equation*}
  for almost every $\omega = \tonde{\omega_1, \omega_2, \omega_3, \omega_4} \in \Gamma$,
  with $x = \omega_1 + \omega_2 = - \omega_3 - \omega_4$.
\end{proof}

In our case $Q(f, f, g, g) = B(f \tensor f, g \tensor g)$.
Lemma~\ref{le:BCS} and Lemma~\ref{le:sigma} imply that
\begin{multline*}
  Q(f, f, f, f) \le
  Q\tonde{f^2, f^2, \uno, \uno} = \\
  = \iint_{(\Sp^2)^2} f(\omega_1)^2 f(\omega_2)^2
  \tonde{\iint_{(\Sp^2)^2} \ddirac{\omega_1 + \omega_2 + \omega_3 + \omega_4}
    \d\sigma_{\omega_3} \d\sigma_{\omega_4}} \d\sigma_{\omega_1} \d\sigma_{\omega_2} = \\
  = \iint_{(\Sp^2)^2} f(\omega_1)^2 f(\omega_2)^2 \frac{2 \pi}{\abs{\omega_1 + \omega_2}}
  \d\sigma_{\omega_1} \d\sigma_{\omega_2},
\end{multline*}
but unfortunately the last integral is too singular for our purposes.

The next lemma contains the geometric information about the symmetries of
the support of the measure~$\Sigma$
which allows us to neutralize the singularity of the previous integral.

\begin{lemma} \label{le:1234}
  Let $\omega_1, \omega_2, \omega_3, \omega_4 \in \Sp^2$
  be such that $\omega_1 + \omega_2 + \omega_3 + \omega_4 = 0$.
  Then
  \begin{equation*}
    \abs{\omega_1 + \omega_2} \abs{\omega_3 + \omega_4} +
    \abs{\omega_1 + \omega_3} \abs{\omega_2 + \omega_4} +
    \abs{\omega_1 + \omega_4} \abs{\omega_2 + \omega_3} = 4.
  \end{equation*}
\end{lemma}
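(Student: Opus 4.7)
The plan is to reduce the three products on the left-hand side to three squares, and then expand using dot products; the constraint $\omega_1+\omega_2+\omega_3+\omega_4 = 0$ should do essentially all of the work.

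First I would observe that pairing exploits the antisymmetry of the constraint: from $\omega_1+\omega_2 = -(\omega_3+\omega_4)$ one gets $\abs{\omega_1+\omega_2}=\abs{\omega_3+\omega_4}$, and similarly $\abs{\omega_1+\omega_3}=\abs{\omega_2+\omega_4}$ and $\abs{\omega_1+\omega_4}=\abs{\omega_2+\omega_3}$. So the quantity to evaluate becomes
\[
  \abs{\omega_1+\omega_2}^2 + \abs{\omega_1+\omega_3}^2 + \abs{\omega_1+\omega_4}^2,
\]
which is the equivalent three-vector formulation mentioned in the introduction.

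Next I would expand each square using $\abs{\omega_i}=1$, writing $\abs{\omega_1+\omega_j}^2 = 2 + 2\,\omega_1\cdot\omega_j$ for $j=2,3,4$. Summing the three terms gives
\[
  6 + 2\,\omega_1\cdot(\omega_2+\omega_3+\omega_4).
\]
Finally, I would apply the constraint once more in the form $\omega_2+\omega_3+\omega_4 = -\omega_1$, which reduces the inner product to $-\abs{\omega_1}^2 = -1$, yielding $6-2 = 4$.

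Honestly, there is no real obstacle here: once one notices the pairing that converts products into squares, the identity collapses on a two-line expansion. The only thing I would double-check is that the three symmetric pairings really are the three one-to-one correspondences of $\graffe{1,2,3,4}$ into two unordered pairs containing index $1$, which they are; this confirms that no pairings are missed or overcounted.
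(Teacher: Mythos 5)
Your proof is correct and is essentially the same argument as the paper's: both first note that the constraint forces $\abs{\omega_j+\omega_k}=\abs{\omega_m+\omega_n}$ for complementary pairs, turning the three products into three squares, and then expand using $\abs{\omega_i}=1$ and the linear constraint. The only cosmetic difference is the choice of representatives: the paper works with the three squares $\abs{\omega_1+\omega_2}^2+\abs{\omega_1+\omega_3}^2+\abs{\omega_2+\omega_3}^2$ and the auxiliary sum $X=\omega_1\cdot\omega_2+\omega_1\cdot\omega_3+\omega_2\cdot\omega_3$, while you pick the three pairs containing index $1$ and dot $\omega_1$ directly against $\omega_2+\omega_3+\omega_4=-\omega_1$, which is marginally more streamlined.
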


\begin{proof}
  Let $X := \omega_1 \cdot \omega_2 + \omega_1 \cdot \omega_3 + \omega_2 \cdot \omega_3$.
  We have $\omega_1 + \omega_2 + \omega_3 = -\omega_4 \in \Sp^2$.
  This implies that
  \begin{equation*}
    1 = \abs{\omega_4}^2 = \abs{\omega_1 + \omega_2 + \omega_3}^2 = 3 + 2 X.
  \end{equation*}
  Hence $X = -1$.
  Then
  \begin{equation*}
    \abs{\omega_1 + \omega_2}^2 + \abs{\omega_1 + \omega_3}^2 + \abs{\omega_2 + \omega_3}^2
    = 6 + 2 X = 4.
  \end{equation*}
  To conclude the proof it is enough to observe that 
  $\abs{\omega_j + \omega_k} = \abs{\omega_m + \omega_n}$
  whenever~$(j,k,m,n)$ is any permutation of $(1,2,3,4)$.
\end{proof}

We combine the result of Lemma \ref{le:1234} with the symmetry properties of $Q$
and obtain 
\begin{equation} \label{eq:C}
  Q(f,f,f,f) =
  \frac34 \int_\Gamma f(\omega_1) f(\omega_2) \abs{\omega_1 + \omega_2}
  f(\omega_3) f(\omega_4) \abs{\omega_3 + \omega_4} \d\Sigma_\omega
  = \frac34 B(F, F),
\end{equation}
where $F(\omega, \nu) := f(\omega) f(\nu) \abs{\omega + \nu}$.
We apply the Cauchy-Schwarz inequality of Lemma~\ref{le:BCS},
use again Lemma~\ref{le:sigma} and obtain
\begin{equation} \label{eq:D}
  B(F, F) \le B(F^2, \uno) =
  2 \pi \iint_{(\Sp^2)^2} f(\omega_1)^2 f(\omega_2)^2 \abs{\omega_1 + \omega_2}
  \d\sigma_{\omega_1} \d\sigma_{\omega_2}.
\end{equation}

\begin{remark} \label{re:equalD}
  We have equality in \eqref{eq:D} if and only if $f(\omega) f(\nu) = h(\omega + \nu)$
  for almost every~$(\omega, \nu) \in (\Sp^2)^2$ and 
  for some measurable function $h(x)$ defined on $\abs{x} \le 2$;
  this happens for example when $f$ is a constant function.
\end{remark}

At this point, since $\abs{\omega_1 + \omega_2} \le 2$,
we can immediately deduce the estimate 
\begin{equation} \label{eq:BFF}
B(F^2, \uno) \le 4 \pi \norm{f}_{L^2}^4,
\end{equation}
and hence prove the inequality \eqref{eq:aFriS},
but the constant is not the optimal one
and we will have strict inequality also for $f$ constant.

\section{Spectral decomposition of the quadratic form}

We consider now the quadratic functional
\begin{equation} \label{eq:defH}
  H(g) := \iint_{(\Sp^2)^2} \ovl{g(\omega)} g(\nu) \abs{\omega - \nu} \d\sigma_\omega \d\sigma_\nu,
\end{equation}
which is well defined, real valued and continuous on $L^1(\Sp^2)$.
It is easy to verify that
\begin{equation*}
  \abs{H(g_1) - H(g_2)} \le
  2 \tonde{\norm{g_1}_{L^1(\Sp^2)} + \norm{g_2}_{L^1(\Sp^2)}} \norm{g_1 - g_2}_{L^1(\Sp^2)}.
\end{equation*}

We want to show that the value of $H(g)$ does not decrease
when we replace~$g$ with a constant function with the same mean value.

\begin{theorem} \label{th:Hg}
  Let $g \in L^1(\Sp^2)$.
  Let $\mu = \1{4 \pi} \int_{\Sp^2} g(\omega) \d\sigma_\omega$
  be the mean value of $g$ on the sphere.
  Then
  \begin{equation*}
    H(g) \le H(\mu \uno) = \abs{\mu}^2 H(\uno).
  \end{equation*}
  Moreover, equality holds if and only if $g$ is constant.
\end{theorem}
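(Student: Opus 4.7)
The section title announces the strategy, and I would follow it: since the kernel $\abs{\omega - \nu} = \sqrt{2(1 - \omega \cdot \nu)}$ depends only on the inner product $\omega \cdot \nu$, it is zonal, and by the Funk--Hecke theorem the operator $(Tg)(\omega) := \int_{\Sp^2} \abs{\omega - \nu}\, g(\nu) \d\sigma_\nu$ is diagonalized by the orthogonal decomposition of $L^2(\Sp^2)$ into spaces of spherical harmonics of fixed degree. The plan is to exhibit a spectral expansion in which the $n = 0$ component already accounts for the right-hand side of the asserted inequality, and every higher-degree eigenvalue is nonpositive, so that the inclusion of any higher spherical harmonic content in $g$ can only decrease $H(g)$.

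Concretely, write $g = \sum_{n \ge 0} Y_n$ in spherical harmonics, so that $Y_0 = \mu \uno$. The Funk--Hecke formula gives $T Y_n = \lambda_n Y_n$ with
\[
\lambda_n := 2 \pi \int_{-1}^1 \sqrt{2(1-t)}\, P_n(t) \d t,
\]
where $P_n$ is the Legendre polynomial of degree $n$. Using the $L^2$-orthogonality of spherical harmonics of different degrees one obtains
\[
H(g) = \sum_{n \ge 0} \lambda_n \Norm{Y_n}_{L^2(\Sp^2)}^2.
\]
Applying the same representation to $g = \uno$ gives $H(\uno) = 4 \pi \lambda_0$, so the $n = 0$ term in the spectral expansion of $H(g)$ is exactly $\lambda_0 \cdot 4 \pi \abs{\mu}^2 = \abs{\mu}^2 H(\uno)$. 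The theorem will follow as soon as I prove $\lambda_n \le 0$ for every $n \ge 1$.

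For this I would invoke the classical Jacobi-type identity
\[
\int_{-1}^1 (1-t)^{1/2} P_n(t) \d t = -\frac{\sqrt{2}}{2}\, \frac{\Gamma(n - 1/2)}{\Gamma(n + 5/2)}, \qquad n \ge 1,
\]
which is derived by inserting Rodrigues' formula for $P_n$ and integrating by parts $n$ times (the boundary contributions vanish thanks to the factors $(1 \pm t)^n$ in $(t^2 - 1)^n$), followed by the reflection formula for $\Gamma$. This yields $\lambda_n = - 2 \pi \Gamma(n - 1/2) / \Gamma(n + 5/2) < 0$ for every $n \ge 1$, while the explicit $\lambda_0 = 16 \pi / 3 > 0$ is immediate. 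Hence
\[
H(g) - \abs{\mu}^2 H(\uno) = \sum_{n \ge 1} \lambda_n \Norm{Y_n}_{L^2(\Sp^2)}^2 \le 0,
\]
with equality if and only if every $Y_n$ with $n \ge 1$ vanishes, i.e., if and only if $g$ is constant. The argument as stated uses $g \in L^2(\Sp^2)$; the full $L^1$ hypothesis is recovered by approximating $g$ by its truncations $g_k := g \cdot \chi_{\{\abs{g} \le k\}} \in L^2(\Sp^2)$ and invoking the Lipschitz-type continuity of $H$ on $L^1(\Sp^2)$ displayed just after the definition of $H$. The one genuine technical hurdle is the sign of the $\lambda_n$ for $n \ge 1$; everything else (the Funk--Hecke expansion, the identification of the $n = 0$ term, and the $L^1$ approximation) is routine once that is in place.
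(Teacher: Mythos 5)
Your proposal is correct, and the overall skeleton --- spherical-harmonic decomposition, Funk--Hecke diagonalization of the zonal kernel $\abs{\omega-\nu}=\sqrt{2(1-\omega\cdot\nu)}$, identification of the degree-$0$ term with $\abs{\mu}^2 H(\uno)$, and an $L^1$ density argument to remove the $L^2$ assumption --- is exactly the paper's. Where you diverge is in the proof of the one essential fact, the strict negativity of the Funk--Hecke eigenvalues $\Lambda_k = \int_{-1}^1\sqrt{2-2t}\,P_k(t)\d t$ for $k\ge 1$, and this is a genuinely different route to the same end.

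The paper proves $\Lambda_k<0$ without any closed form: it differentiates the Legendre generating function to obtain the identity $(2k+1)P_k = P'_{k+1}-P'_{k-1}$ (Lemma~\ref{le:Pkkk}), integrates by parts to write $(2k+1)\Lambda_k = A_{k+1}-A_{k-1}$ with $A_k = \int_{-1}^1 P_k(t)(2-2t)^{-1/2}\d t$, and evaluates $A_k=\frac{2}{2k+1}$ by sending $r\to 1$ in the generating function; the decrease of $A_k$ then gives $\Lambda_k<0$ by telescoping. You instead invoke the closed form $\int_{-1}^1(1-t)^{1/2}P_n(t)\d t = -\frac{\sqrt 2}{2}\frac{\Gamma(n-1/2)}{\Gamma(n+5/2)}$ via Rodrigues' formula, $n$-fold integration by parts, the Beta integral, and the reflection formula; negativity for $n\ge 1$ is then visible since both Gamma arguments are positive. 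Your formula is correct (it agrees with the paper's $\Lambda_k = \frac{-8}{(2k-1)(2k+1)(2k+3)}$, e.g.\ $\Lambda_1=-\frac{8}{15}$), and your remark about the vanishing boundary terms is the right justification --- the blow-up of $\frac{\d^j}{\d t^j}(1-t)^{1/2}$ at $t=1$ is killed by the higher-order zero of $\frac{\d^{n-j-1}}{\d t^{n-j-1}}(t^2-1)^n$ there. Each approach buys something: yours yields the explicit eigenvalues in one shot and is self-verifying once the Gamma identity is trusted (one could in fact read off the sign directly from the integration-by-parts representation, since $\frac{\d^n}{\d t^n}(1-t)^{1/2}$ and $(t^2-1)^n$ have constant signs on $(-1,1)$, without ever computing the Gammas); the paper's route stays within elementary manipulations of the generating function and avoids the Beta/reflection machinery entirely, at the cost of a small dominated-convergence step in evaluating $A_k$. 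As a matter of exposition, if you adopt the Rodrigues route you should carry out the Beta integral and the reflection step rather than merely cite them, since that computation is the substance of the lemma.
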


By the continuity of $H$ on $L^1(\Sp^2)$,
it is enough to prove the theorem for functions in a dense subset of~$L^1(\Sp^2)$,
for example in the Hilbert space $L^2(\Sp^2)$.
When~\mbox{$g \in L^2(\Sp^2)$},
we consider the decomposition of $g$ as a sum of its spherical harmonics components.
A spherical harmonic $Y_k$ of degree $k$ is an eigenfunction of~$\Delta_{\Sp^2}$
corresponding to the eigenvalue $-k(k+1)$,
\begin{equation*}
  \Delta_{\Sp^2} Y_k = -k (k + 1) Y_k,
\end{equation*}
where $\Delta_{\Sp^2}$ stands for the Lapace-Beltrami operator on the sphere
acting on scalar functions.
Any function in $L^2(\Sp^2)$ can be expanded as a sum of orthogonal spherical harmonics
(see for example \cite[chapter IV]{SW}).

Spherical harmonics are related to Legendre polynomials.
The latter can be defined in terms of a generating function:
when $\abs{r} < 1$ and $\abs{t} \le 1$,
if we write the power series expansion
\begin{equation} \label{eq:genfun}
  \tonde{1 - 2 r t + r^2}^{-\12} = \sum_{k \ge 0} P_k(t) r^k,
\end{equation}
then, for any integer $k \ge 0$,
the coefficient $P_k(t)$ is the Legendre polynomial of degree $k$.
These polynomials form a complete orthogonal system in $L^2([-1,1])$
and we have
\begin{equation*}
  \int_{-1}^1 P_k(t)^2 \d t = \frac2{2k+1}.
\end{equation*}
We are going to need the following facts about spherical harmonics and Legendre polynomials.

\begin{lemma}[Funk-Hecke formula] \label{le:FH}
  Let $\phi$ be a continuous functions on $[-1,1]$
  and~$Y_k$ be a spherical harmonic of degree $k$.
  Then for any $\omega \in L^2(\Sp^2)$ we have
  \begin{equation*}
    \int_{\Sp^2} \phi(\omega \cdot \nu) Y_k(\nu) \d\sigma_\nu = 2 \pi \lambda_k Y_k(\omega),
  \end{equation*}
  where
  \begin{equation} \label{eq:lambdak}
    \lambda_k = \int_{-1}^1 \phi(t) P_k(t) \d t,
  \end{equation}
  and $P_k$ is the Legendre polynomial of degree $k$.
\end{lemma}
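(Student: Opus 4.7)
The plan is to exploit the rotational symmetry of the integral operator
$T_\phi f(\omega) := \int_{\Sp^2} \phi(\omega \cdot \nu) f(\nu) \d\sigma_\nu$.
First I would verify that $T_\phi$ commutes with the natural action of $SO(3)$ on functions on the sphere: for any rotation $R \in SO(3)$, the change of variable $\nu \mapsto R^{-1} \nu$ together with the rotation invariance of $\sigma$ and of the inner product yields $T_\phi(f \circ R^{-1}) = (T_\phi f) \circ R^{-1}$.

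Next I would invoke the standard decomposition $L^2(\Sp^2) = \bigoplus_{k \ge 0} \mathcal{H}_k$ into the orthogonal eigenspaces $\mathcal{H}_k$ of $\Delta_{\Sp^2}$ with eigenvalues $-k(k+1)$. Each $\mathcal{H}_k$ is a finite dimensional irreducible representation of $SO(3)$ under pullback, and these isotypic components are pairwise inequivalent. Any rotation-equivariant bounded linear operator must therefore preserve each $\mathcal{H}_k$, and by Schur's lemma it acts on $\mathcal{H}_k$ as a scalar. Consequently, there exists $c_k \in \R$ such that $T_\phi Y_k = c_k Y_k$ for every $Y_k \in \mathcal{H}_k$.

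To identify $c_k$, I would pass to zonal harmonics. For a fixed pole $e \in \Sp^2$, the function $Z_k(\omega) := P_k(\omega \cdot e)$ lies in $\mathcal{H}_k$ and satisfies $Z_k(e) = P_k(1) = 1$; this classical fact can be read off from the generating function \eqref{eq:genfun}, which exhibits $Z_k$ as the restriction to $\Sp^2$ of a harmonic homogeneous polynomial of degree $k$ invariant under rotations fixing $e$. Evaluating the identity $T_\phi Z_k = c_k Z_k$ at $\omega = e$ and parametrizing $\nu$ by the colatitude $\theta$ so that $t = e \cdot \nu = \cos\theta$ yields
\[
  c_k = \int_{\Sp^2} \phi(e \cdot \nu) P_k(e \cdot \nu) \d\sigma_\nu
      = 2 \pi \int_0^\pi \phi(\cos\theta) P_k(\cos\theta) \sin\theta \d\theta
      = 2 \pi \lambda_k,
\]
as required. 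Applying the scalar action $T_\phi Y_k = 2\pi \lambda_k Y_k$ to an arbitrary $Y_k \in \mathcal{H}_k$ then gives the formula.

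The main obstacle is purely the appeal to this representation-theoretic package -- the irreducibility of each $\mathcal{H}_k$ under $SO(3)$, and the identification of zonal harmonics with Legendre polynomials -- rather than any genuine analytic difficulty; both facts are classical and can be found, for instance, in \cite[Chapter IV]{SW}. A fully self-contained alternative would be to expand $\phi$ in the Legendre basis of $L^2([-1,1])$ and reduce to the special case $\phi = P_j$ by means of the addition theorem for spherical harmonics, trading Schur's lemma for a direct combinatorial identity.
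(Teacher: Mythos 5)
The paper does not actually prove Lemma~\ref{le:FH}: immediately after the statement it defers to the literature, citing \cite[p.~247]{EMOTii} for a proof and its higher-dimensional generalization. So there is no in-paper argument to compare against; your proposal supplies a proof where the paper chose not to.

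That said, your argument is correct and is one of the two standard routes to the Funk--Hecke formula. The equivariance $T_\phi(f\circ R) = (T_\phi f)\circ R$ is an immediate change of variables using the rotation invariance of $\sigma$ and of $\omega\cdot\nu$; the isotypic decomposition $L^2(\Sp^2) = \bigoplus_k \mathcal H_k$ into pairwise inequivalent irreducibles forces $T_\phi$ to preserve each $\mathcal H_k$, and Schur's lemma gives the scalar $c_k$ (for the real representation $\mathcal H_k$ the commutant is $\R$ since it is of real type, or one may simply complexify; this is a standard point you are entitled to gloss over). Your identification of $c_k$ via the zonal harmonic $Z_k(\omega) = P_k(\omega\cdot e)$ is clean: $Z_k \in \mathcal H_k$ because $|\omega|^k P_k(\omega\cdot e/|\omega|)$ is the degree-$k$ homogeneous piece of the harmonic function $|\omega-e|^{-1}$, which is exactly what the generating function \eqref{eq:genfun} says after the substitution $r = |\omega|$, $t = \omega\cdot e/|\omega|$; evaluating at $\omega = e$ with $P_k(1)=1$ and integrating out the azimuth produces the factor $2\pi\int_{-1}^1\phi(t)P_k(t)\d t$. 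The alternative you mention (expand $\phi$ in Legendre polynomials and invoke the addition theorem) is the other classical proof and is essentially what \cite{EMOTii} does; it trades Schur's lemma for the addition theorem, which is itself usually proved by the same equivariance argument, so the two are of comparable depth. Your write-up is a perfectly acceptable self-contained substitute for the paper's citation.
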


A proof of Lemma \ref{le:FH} and its generalization to higher dimensions
can be found in~\mbox{\cite[p.~247]{EMOTii}}.

\begin{lemma} \label{le:Pkkk}
  For any integer $k \ge 1$ we have
  \begin{equation} \label{eq:rec0}
    (2 k + 1) P_k(t) = \der{}{t} \Tonde{P_{k+1}(t) - P_{k-1}(t)}.
  \end{equation}
\end{lemma}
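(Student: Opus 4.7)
The plan is to prove the identity directly from the generating function \eqref{eq:genfun} by showing that two formal power series in $r$ coincide; this is essentially a bookkeeping exercise with no deep obstruction. Set $G(r,t) := (1-2rt+r^2)^{-1/2} = \sum_{k \ge 0} P_k(t) r^k$ for $\abs{r} < 1$, $\abs{t} \le 1$, and note the two immediate partial derivatives
\begin{equation*}
  \de_r G = (t-r)(1-2rt+r^2)^{-3/2}, \qquad
  \de_t G = r (1-2rt+r^2)^{-3/2}.
\end{equation*}

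First I would compute a closed form for $\sum_{k \ge 0} (2k+1) P_k(t) r^k$. Writing $(2k+1) P_k r^k = P_k r^k + 2 r \cdot k P_k r^{k-1}$ and summing, this equals $G + 2r \de_r G$. Substituting the expression for $\de_r G$ and putting the sum over the common denominator $(1-2rt+r^2)^{3/2}$, the numerator collapses to $(1-2rt+r^2) + 2r(t-r) = 1 - r^2$, so
\begin{equation*}
  \sum_{k \ge 0} (2k+1) P_k(t) r^k = \frac{1-r^2}{(1-2rt+r^2)^{3/2}}.
\end{equation*}

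Next I would compute $S(r,t) := \sum_{k \ge 1} \Tonde{P_{k+1}(t) - P_{k-1}(t)} r^k$ by shifting indices: the first piece is $r^{-1}(G - 1 - tr)$ (using $P_0 = 1$, $P_1(t) = t$) and the second piece is $r G$, giving
\begin{equation*}
  S(r,t) = \frac{G(r,t)(1 - r^2) - 1 - tr}{r}.
\end{equation*}
Applying $\de_t$ and using $\de_t G = r(1-2rt+r^2)^{-3/2}$ kills the $-1-tr$ term up to a constant and cancels the factor of $r$, yielding
\begin{equation*}
  \de_t S(r,t) = \frac{1-r^2}{(1-2rt+r^2)^{3/2}} - 1.
\end{equation*}
Comparing with the first closed form (whose $k=0$ term equals $P_0 = 1$), the two series $\sum_{k \ge 1}(2k+1)P_k(t) r^k$ and $\de_t S(r,t)$ coincide. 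Equating the coefficient of $r^k$ on each side gives \eqref{eq:rec0}, since differentiation in $t$ commutes with the formal $r$-expansion. The only real work is the two algebraic simplifications of rational expressions in $r$ and $t$, both of which are elementary.
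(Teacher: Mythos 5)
Your proof is correct; I checked the algebra and every step holds. It is a genuine variant of the paper's argument, though both are rooted in the generating function $G(r,t)=(1-2rt+r^2)^{-1/2}$. The paper proceeds in two stages: differentiating $G$ in $r$ and matching coefficients yields Bonnet's recursion $(2k+1)tP_k=(k+1)P_{k+1}+kP_{k-1}$, which is then differentiated in $t$; separately, differentiating $G$ in $t$ and matching coefficients yields $P_k=P_{k+1}'-2tP_k'+P_{k-1}'$; a linear combination of these two derived recurrences eliminates the $tP_k'$ terms and gives \eqref{eq:rec0}. You instead bypass the intermediate recurrences entirely: you compute a closed form $\frac{1-r^2}{(1-2rt+r^2)^{3/2}}$ for the generating series of $(2k+1)P_k$, and a closed form for the generating series of $P_{k+1}-P_{k-1}$, then observe that applying $\de_t$ to the latter reproduces the former (minus the $k=0$ term). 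The net effect is the same coefficient identity, but your route is a single head-on computation with rational functions of $r$, whereas the paper's route reuses two classical Legendre recurrences that are of independent interest. Yours is arguably a cleaner self-contained derivation of this specific identity; the paper's makes the connection to Bonnet's formula explicit. Either is perfectly acceptable.
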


\begin{proof}
  Differentiate \eqref{eq:genfun} with respect to $r$,
  \begin{equation*}
    (t - r) \tonde{1 - 2 r t + r^2}^{-\frac32} = \sum_{k \ge 0} k P_k(t) r^{k-1}.
  \end{equation*}
  Multiply on both sides by $1 - 2rt + r^2$,
  \begin{equation*}
    (t - r) \sum_{k \ge 0} P_k(t) r^k = (1 - 2 r t + r^2)  \sum_{k \ge 0} k P_k(t) r^{k-1}.
  \end{equation*}
  From this identity, equate the coefficients which multiply the same power~$r^k$,
  for any~\mbox{$k \ge 1$}, and obtain Bonnet's recursion formula
  \begin{equation*}
    (2 k + 1) t P_k(t) = (k+1) P_{k+1}(t) + k P_{k-1}(t).
  \end{equation*}
  Differentiate with respect to $t$,
  \begin{equation} \label{eq:rec1}
    (2 k + 1) P_k(t) = (k+1) P_{k+1}'(t) - (2 k + 1) t P_k'(t) + k P_{k-1}'(t).
  \end{equation}

  Now, differentiate \eqref{eq:genfun} with respect to $t$,
  \begin{equation*}
    \tonde{1 - 2 r t - r^2}^{-\frac32} = \sum_{k \ge 1} P_k'(t) r^{k-1}.    
  \end{equation*}
  Again, multiply on both sides by $1 - 2rt + r^2$, and obtain
  \begin{equation*}
    \sum_{k \ge 0} P_k(t) r^k = (1 - 2 r t + r^2)  \sum_{k \ge 1} P_k'(t) r^{k-1}.
  \end{equation*}
  From this identity, equate the coefficients which multiply the same power~$r^k$,
  for any~\mbox{$k \ge 1$}, and obtain another recurrence formula,
  \begin{equation} \label{eq:rec2}
    P_k(t) = P_{k + 1}'(t) - 2 t P_k'(t) + P_{k-1}'(t).
  \end{equation}
  To end the proof,
  multiply~\eqref{eq:rec1} by~$2$ and subtract~\eqref{eq:rec2} multiplied by~$2 k +1$
  to get~\eqref{eq:rec0}.
\end{proof}

We also need to know the sign of the coefficients \eqref{eq:lambdak}
when~\mbox{$\phi(t) = \sqrt{2 - 2 t}$}.

\begin{lemma} \label{le:Lambdak}
  The integrals $\Lambda_k := \int_{-1}^1 \sqrt{2 - 2 t} \, P_k(t) \d t$
  are negative numbers for all~$k \ge 1$.
\end{lemma}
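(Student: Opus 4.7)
The plan is to first use the recursion formula from Lemma~\ref{le:Pkkk} to rewrite $P_k$ as the derivative of $P_{k+1}-P_{k-1}$, and then integrate by parts. Explicitly, I would compute
\begin{equation*}
  (2k+1)\Lambda_k = \int_{-1}^1 \sqrt{2-2t}\,\der{}{t}\Tonde{P_{k+1}(t) - P_{k-1}(t)} \d t.
\end{equation*}
Integrating by parts with $u = \sqrt{2-2t}$, the boundary terms vanish: at $t=1$ the factor $\sqrt{2-2t}$ is zero, while at $t=-1$ one has $P_{k+1}(-1) - P_{k-1}(-1) = (-1)^{k+1} - (-1)^{k-1} = 0$ because $k+1$ and $k-1$ have the same parity. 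Since $\der{}{t}\sqrt{2-2t} = -1/\sqrt{2-2t}$, this yields
\begin{equation*}
  (2k+1)\Lambda_k = I_{k+1} - I_{k-1}, \qquad
  I_k := \int_{-1}^1 \frac{P_k(t)}{\sqrt{2-2t}} \d t.
\end{equation*}

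Next I would evaluate the auxiliary integrals $I_k$ by summing their generating function. Using \eqref{eq:genfun} and substituting $s = \sqrt{2-2t}$ (so that $1-2rt+r^2 = (1-r)^2 + rs^2$), one obtains
\begin{equation*}
  \sum_{k \ge 0} I_k r^k = \int_0^2 \frac{\d s}{\sqrt{(1-r)^2 + rs^2}}
  = \frac{1}{\sqrt{r}} \ln\frac{1+\sqrt{r}}{1-\sqrt{r}}
  = 2 \sum_{k \ge 0} \frac{r^k}{2k+1},
\end{equation*}
where the last step uses the Taylor expansion of $\ln\frac{1+x}{1-x}$. Matching coefficients gives $I_k = 2/(2k+1)$.

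Finally, plugging back in:
\begin{equation*}
  (2k+1)\Lambda_k = \frac{2}{2k+3} - \frac{2}{2k-1} = \frac{-8}{(2k-1)(2k+3)},
\end{equation*}
so that $\Lambda_k = -8/\bigl[(2k-1)(2k+1)(2k+3)\bigr]$, which is manifestly negative for every $k \ge 1$. The only delicate point is checking that the boundary terms in the integration by parts truly vanish (which works precisely because $P_{k+1}$ and $P_{k-1}$ agree at $t=-1$); the rest is an explicit, if slightly fiddly, generating-function calculation. As a bonus, this approach produces a clean closed form for $\Lambda_k$ that will be useful downstream in the spectral decomposition of $H$.
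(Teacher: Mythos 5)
Your proof is correct, and its skeleton matches the paper's: both use Lemma~\ref{le:Pkkk} together with integration by parts to reduce the claim to showing that the auxiliary integrals
\begin{equation*}
  I_k = \int_{-1}^1 \frac{P_k(t)}{\sqrt{2-2t}} \d t
\end{equation*}
form a strictly decreasing sequence, since $(2k+1)\Lambda_k = I_{k+1} - I_{k-1}$. (The paper calls these $A_k$, writes ``integration by parts'' without spelling out the boundary terms, and you supply the vanishing-at-$t=\pm1$ check explicitly, which is a nice addition.) Where the two arguments diverge is in the evaluation of $I_k$. The paper writes $I_k = \lim_{r\to1}\int_{-1}^1 P_k(t)(1-2rt+r^2)^{-1/2}\d t$, justifies the limit by dominated convergence, expands the kernel in the generating series \eqref{eq:genfun}, and uses the orthogonality of Legendre polynomials to pick off the coefficient, landing on $I_k = 2/(2k+1)$ with essentially no computation. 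You instead sum the generating function $\sum_k I_k r^k$, evaluate the resulting integral in closed form via the substitution $s=\sqrt{2-2t}$ (obtaining $\frac{1}{\sqrt r}\ln\frac{1+\sqrt r}{1-\sqrt r}$), and match Taylor coefficients. Both are valid; the paper's route is slicker because it outsources the work to orthogonality, whereas yours is more hands-on but, as you note, yields the clean closed form $\Lambda_k = -8/\bigl[(2k-1)(2k+1)(2k+3)\bigr]$ as a bonus. One small thing you leave implicit: the interchange $\sum_k \int = \int \sum_k$ defining your generating function needs a word of justification (e.g.\ $|P_k(t)|\le 1$ on $[-1,1]$ gives domination by $(1-r)^{-1}(2-2t)^{-1/2}$, which is integrable), but this is routine.
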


\begin{proof}
  Let $k \ge 1$.
  We use Lemma \ref{le:Pkkk} and integration by parts,
  \begin{equation} \label{eq:LAA}
    (2 k + 1) \Lambda_K =
    \int_{-1}^1 \sqrt{2 - 2 t} \Tonde{P_{k+1}'(t) - P_{k-1}'(t)} \d t =
    A_{k+1} - A_{k-1},
  \end{equation}
  where
  \begin{equation*}
    A_k := \int_{-1}^1 \frac{P_k(t)}{\sqrt{2 - 2 t}} \d t =
    \lim_{r \to 1} \int_{-1}^1 \frac{P_k(t)}{\sqrt{1 - 2 r t + r^2}} \d t.
  \end{equation*}
  The convergence of the limit follows from Lebesgue's dominated convergence theorem,
  since we can use the inequality $1 - 2 r t + r^2 \ge 2 r (1-t)$ to bound the denominator.
  From the generating function identity~\eqref{eq:genfun}
  and the orthogonality properties of Legendre polynomials we deduce that 
  \begin{equation*}
    A_k = \lim_{r \to 1} r^k \int_{-1}^1 P_k(t)^2 \d t = \frac2{2 k +1}.
  \end{equation*}
  This shows that the coefficients $A_k$ form a decreasing sequence,
  and by \eqref{eq:LAA} it follows that $\Lambda_k$ is negative for any $k \ge 1$.
\end{proof}

\begin{proof}[Proof of Theorem \ref{th:Hg}]
  When $g$ is a function in $L^2(\Sp^2)$,
  we decompose it into the sum
  \begin{equation*}
    g(\omega) = \sum_{k \ge 0} Y_k(\omega), 
  \end{equation*}
  where $Y_k$ is a spherical harmonic of degree $k$.
  In particular, the spherical harmonic component of~$f$ of degree~$0$
  is given by the constant function~$\mu \uno$,
  where~$\mu$ is the mean value of~$f$ on~$\Sp^2$.
  We have
  \begin{equation*}
    H(g) = \sum_{j,k \ge 0}
    \iint_{(\Sp^2)^2} \ovl{Y_j(\omega)} Y_k(\nu) \abs{\omega - \nu} \d\sigma_\nu \d\sigma_\omega.
  \end{equation*}

  By the Funk-Hecke formula of Lemma \ref{le:FH} we have that
  \begin{equation*}
    \int_{\Sp^2} \abs{\omega - \nu} Y_k(\nu) \d\sigma_\nu = 
    \int_{\Sp^2} \sqrt{2 (1 - \omega\cdot\nu)} Y_k(\nu) \d\sigma_\nu =
    2 \pi \Lambda_k Y_k(\omega),
  \end{equation*}
  where $\Lambda_k$ are the coefficients computed in Lemma \ref{le:Lambdak}.
  By the orthogonality properties of spherical harmonics we deduce that
  \begin{equation*}
    H(g) = 2 \pi \sum_{k \ge 0} \Lambda_k \norm{Y_k}_{L^2(\Sp^2)}^2 \le
    2 \pi \Lambda_0 \norm{Y_0}_{L^2(\Sp^2)}^2 = H(\mu\uno),
  \end{equation*}
  since we know by Lemma \ref{le:Lambdak} that $\Lambda_k < 0$ when~$k \ge 1$.
  Here we have equality if and only if~$Y_k \equiv 0$ for all~$k \ge 1$,
  which means that~$f = Y_0$ is a constant function.

  The case for a generic $g \in L^1(\Sp^2)$ follows by a density argument
  and by the continuity of~$H$ on $L^1(\Sp^2)$.
\end{proof}

\section{Constants are (the only real valued) maximizers}

We are now ready to put together all the steps we need
in order to prove estimate~\eqref{eq:aFriS} with its best constant.
From \eqref{eq:A}, \eqref{eq:B} and Corollary~\ref{co:ffff} we have
\begin{equation*}
  \Norm{\widehat{f \sigma}}_{L^4(\R^3)}^4 =
  (2 \pi)^3 \Norm{f \sigma \conv f \sigma}_{L^2(\R^3)}^2 =
  (2 \pi)^3 Q(f, f_\star, f, f_\star) \le (2 \pi)^3 Q(f_\sharp, f_\sharp, f_\sharp, f_\sharp),
\end{equation*}
where $Q$ was defined in \eqref{eq:defQ}.
By Remark \ref{re:eqQ}, when $f$ is a nonnegative function
we have equality here if and only if $f = f_\sharp$ is antipodally symmetric.

From \eqref{eq:C}, \eqref{eq:D} and the symmetry of $f_\sharp$, we get
\begin{multline*}
  (2 \pi)^3 Q(f_\sharp, f_\sharp, f_\sharp, f_\sharp) \le \frac34 (2 \pi)^4
  \iint_{(\Sp^2)^2} f_\sharp(\omega)^2 f_\sharp(\nu)^2 \abs{\omega + \nu}
  \d\sigma_\omega \d\sigma_\nu = \\
  = 12 \pi^4 \iint_{(\Sp^2)^2} f_\sharp(\omega)^2 f_\sharp(\nu)^2 \abs{\omega - \nu}
  \d\sigma_\omega \d\sigma_\nu =
  12 \pi^4 H(f_\sharp^2),
\end{multline*}
where $H$ was defined in \eqref{eq:defH}.
As observed in Remark \ref{re:equalD},
we have equality here when $f$ is constant.

The mean value of $f_\sharp^2$ on $\Sp^2$ is
\begin{equation*}
  \mu := \1{4\pi} \int_{\Sp^2} f_\sharp(\omega)^2 \d\sigma_\omega = \1{4 \pi} \norm{f}_{L^2(\Sp^2)}^2
\end{equation*}
By Theorem \ref{th:Hg} we have that
\begin{equation*}
  12 \pi^4 H(f_\sharp^2) \le 12 \pi^4 \mu^2 H(\uno) = \frac34 \pi^2 H(\uno) \norm{f}_{L^2(\Sp^2)}^4.
\end{equation*}
Here equality holds if and only if $f_\sharp$ is constant.
The value of $H(\uno)$ is easily computed:
\begin{multline*}
  H(\uno) = \iint_{(\Sp^2)^2} \abs{\omega - \nu} \d\sigma_\nu \d\sigma_\omega = 
  \iint_{(\Sp^2)^2} \sqrt{2 (1 - \omega \cdot \nu)} \d\sigma_\nu \d\sigma_\omega = \\
  = 4 \pi \cdot 2\pi \cdot \sqrt2 \int_{-1}^1 \sqrt{1 - t} \d t = \frac{64}3 \pi^2.
\end{multline*}
The chain of inequalities collected in this section gives us
$\Norm{\widehat{f \sigma}}_{L^4(\R^3)}^4 \le 16 \pi^4 \norm{f}_{L^2(\Sp^2)}^4$,
with equality if and only if $f = f_\sharp$ is constant.
This proves Theorem \ref{th:main}.

\section*{Acknowledgments}

The author is grateful to Nicola Visciglia for suggesting to look at \cite{CS}
and work on this problem, and for his helpful comments on the first draft,
and to Rupert Frank for a remark
which allowed to considerably simplify the proof of Theorem~\ref{th:Hg}.

\bibliographystyle{plain}

\end{document}